\documentclass[reqno]{amsart}
\usepackage{hyperref}
\usepackage{amssymb}

\allowdisplaybreaks

\theoremstyle{plain}
\newtheorem{theorem}{Theorem}[section]

\newtheorem{lemma}[theorem]{Lemma}
\newtheorem{corollary}[theorem]{Corollary}

\theoremstyle{definition}
\newtheorem{definition}[theorem]{Definition}

\theoremstyle{remark}
\newtheorem{remark}[theorem]{Remark}
\numberwithin{equation}{section}

\begin{document}

\title[Parabolic equation with nonlinear nonlocal boundary condition]
{Initial boundary value problem for a semilinear parabolic
equation with absorption and nonlinear nonlocal boundary condition}

\author[A. Gladkov]{Alexander Gladkov}
\address{Alexander Gladkov \\ Department of Mechanics and Mathematics
\\ Belarusian State University \\ Nezavisimosti Avenue 4, 220030
Minsk, Belarus} \email{gladkoval@mail.ru}

\subjclass[2010]{Primary 35K20, 35K58, 35K61}
\keywords{semilinear
parabolic equation, nonlocal boundary condition, local solution,
uniqueness}

\begin{abstract}
In this paper we consider an initial boundary value problem for a
semilinear parabolic equation with absorption and  nonlinear
nonlocal Neumann boundary condition. We prove comparison
principle, the existence theorem of a local solution and  study
the problem of uniqueness and nonuniqueness.
\end{abstract}

\maketitle

\section{Introduction}

In this paper we consider the initial boundary value problem for
the following semilinear parabolic equation
\begin{equation}\label{v:u}
    u_t=\Delta u-c(x,t)u^p,\;x\in\Omega,\;t>0,
\end{equation}
with nonlinear nonlocal boundary condition
\begin{equation}\label{v:g}
\frac{\partial u(x,t)}{\partial\nu}=
\int_{\Omega}{k(x,y,t)u^l(y,t)}\,dy,\;x\in\partial\Omega, \; t
\geq 0,
\end{equation}
and initial datum
\begin{equation}\label{v:n}
    u(x,0)=u_{0}(x),\; x\in\Omega,
\end{equation}
where $p>0,\,l>0$, $\Omega$ is a bounded domain in $\mathbb{R}^n$
for $n\geq1$ with smooth boundary $\partial\Omega$, $\nu$ is unit
outward normal on $\partial\Omega.$

Throughout this paper we suppose that the functions
$c(x,t),\;k(x,y,t)$ and $u_0(x)$ satisfy the following conditions:
\begin{equation*}
c(x,t)\in
C^{\alpha}_{loc}(\overline{\Omega}\times[0,+\infty)),\;0<\alpha<1,\;c(x,t)\geq0;
\end{equation*}
\begin{equation*}
k(x, y, t)\in
C(\partial\Omega\times\overline{\Omega}\times[0,+\infty)),\;k(x,y,t)\geq0;
\end{equation*}
\begin{equation*}
u_0(x)\in C^1(\overline{\Omega}),\;u_0(x)\geq0\textrm{ in
}\Omega,\;\frac{\partial u_0(x)}{\partial\nu}=\int_{\Omega}{k(x,
y,0)u_0^l(y)}\,dy\textrm{ on }\partial\Omega.
\end{equation*}
A lot of articles have been devoted to the investigation of
initial boundary value problems for parabolic equations and
systems with nonlinear nonlocal Dirichlet boundary condition (see,
for example,~\cite{Deng, Fang, Gao, Gladkov_Guedda,
Gladkov_Guedda2, Gladkov_Kim, Gladkov_Kim1, Gladkov_Nikitin, Liu1,
Liu2, Zhong, Zhou} and the references therein). In particular, the
initial boundary value problem for equation~(\ref{v:u}) with
nonlocal boundary condition
\begin{equation*}
    u(x,t)=\int_{\Omega}k(x,y,t)u^l(y,t)\,dy,\;x\in\partial\Omega,\;t>0,
\end{equation*}
was considered for $c(x,t) \leq 0$ and $c(x,t) \geq 0$
in~\cite{Gladkov_Guedda, Gladkov_Guedda2} and~\cite{Gladkov_Kim,
Gladkov_Kim1} respectively. The problem~(\ref{v:u})--(\ref{v:n})
with $c(x,t) \leq 0$ were investigated in~\cite{Gladkov_Kavitova,
Gladkov_Kavitova2}.

We note that for $p<1$ and $l<1$ the nonlinearities in
equation~(\ref{v:u}) and boundary condition~(\ref{v:g}) are
non-Lipschitzian. The problem of uniqueness and nonuniqueness for
different parabolic nonlinear equations with non-Lipschitzian data
in bounded domain has been addressed by several authors (see, for
example, \cite{B, Cortazar, CER, Escobedo_Herrero, FW,
Gladkov_Guedda2, Gladkov_Kim1, K} and the references therein).

The aim of this paper is to study problem~(\ref{v:u})--(\ref{v:n})
for any $p>0$ and $l>0.$ We prove existence of a local solution
and establish some uniqueness and nonuniqueness results.

This paper is organized as follows. In the next section we prove
the existence of a local solution. Comparison principle and  the
problem of uniqueness and nonuniqueness
for~(\ref{v:u})--(\ref{v:n}) are investigated in Section~3.

\section{ Local existence}

In this section a local existence theorem
for~(\ref{v:u})--(\ref{v:n}) will be proved. We begin with
definitions of a supersolution, a subsolution and a maximal
solution of~(\ref{v:u})--(\ref{v:n}). Let
$Q_T=\Omega\times(0,T),\;S_T=\partial\Omega\times(0,T)$,
$\Gamma_T=S_T\cup\overline\Omega\times\{0\}$, $T>0$.
\begin{definition}\label{v:sup}
We say that a nonnegative function $u(x,t)\in C^{2,1}(Q_T)\cap
C^{1,0}(Q_T\cup\Gamma_T)$ is a supersolution
of~(\ref{v:u})--(\ref{v:n}) in $Q_{T}$ if
        \begin{equation}\label{v:sup^u}
u_{t}\geq\Delta u-c(x, t)u^{p},\;(x,t)\in Q_T,
        \end{equation}
        \begin{equation}\label{v:sup^g}
\frac{\partial u(x,t)}{\partial\nu}\geq\int_{\Omega}{k(x, y,
t)u^l(y, t) }\,dy, \; x \in \partial \Omega,\; 0 \leq t < T,
        \end{equation}
        \begin{equation}\label{v:sup^n}
            u(x,0)\geq u_{0}(x),\; x\in\Omega,
        \end{equation}
and $u(x,t)\in C^{2,1}(Q_T)\cap C^{1,0}(Q_T\cup\Gamma_T)$ is a
subsolution of~(\ref{v:u})--(\ref{v:n}) in $Q_{T}$ if $u\geq0$ and
it satisfies~(\ref{v:sup^u})--(\ref{v:sup^n}) in the reverse
order. We say that $u(x,t)$ is a solution of
problem~(\ref{v:u})--(\ref{v:n}) in $Q_T$ if $u(x,t)$ is both a
subsolution and a supersolution of~(\ref{v:u})--(\ref{v:n}) in
$Q_{T}$.
\end{definition}
\begin{definition}\label{v:max1}
We say that a solution $u(x,t)$ of~(\ref{v:u})--(\ref{v:n}) in
$Q_{T}$ is a maximal solution if for any other solution $v(x,t)$
of~(\ref{v:u})--(\ref{v:n}) in $Q_{T}$ the inequality $v(x,t)\leq
u(x,t)$ is satisfied for $(x,t)\in Q_T\cup\Gamma_T$.
\end{definition}
\begin{definition}\label{Def2}
We say that $u$ is a strict supersolution of problem
(\ref{v:u})--(\ref{v:n}) in $Q_T$ if it is a supersolution in
$Q_T$ and the inequality in (\ref{v:sup^g}) is strict. Analogously
we say that $u$ is a strict subsolution of problem
(\ref{v:u})--(\ref{v:n}) in $Q_T$ if it is a subsolution in $Q_T$
and the inequality in (\ref{v:sup^g}) is reversed and strict.
\end{definition}

Let $\{\varepsilon_m\}$ be decreasing to $0$ sequence such that
$0<\varepsilon_m<1.$ For $\varepsilon=\varepsilon_m$ let
$u_{0\varepsilon}(x)$ be the functions with the following
properties: $u_{0\varepsilon}(x) \in C^1(\overline\Omega),\,$
$u_{0\varepsilon}(x) \ge \varepsilon,\,$ $u_{0\varepsilon_i}(x)
\ge u_{0\varepsilon_j}(x)$ for $\varepsilon_i>\varepsilon_j, \,$
$u_{0\varepsilon}(x) \to u_{0}(x)$ as $\varepsilon \to 0$ and
$$
\frac{\partial u_{0\varepsilon} (x)}{\partial\nu} = \int_{\Omega}
k(x,y,0) u^l_{0\varepsilon}(y) \, dy,
$$
for $x \in \partial \Omega.$ Since the nonlinearities in
(\ref{v:u}), (\ref{v:g}) , the Lipschitz condition can be not
satisfied, and thus we need to consider the following auxiliary
problem:
\begin{eqnarray} \label{E:2.1}
\left\{ \begin{array}{ll} u_{t}= \Delta u - c(x,t) u^p + c(x,t)
\varepsilon^p \,\,\,&\textrm{for} \,\,\, x \in \Omega,
\,\,\,\,\, t > 0, \\
\frac{\partial u(x,t)}{\partial\nu} = \int_{\Omega} k(x,y,t) u^l
(y,t) dy \,\,\,
& \textrm{for} \,\,\, x \in \partial \Omega, \,\, t \geq 0,  \\
u(x,0)= u_{0\varepsilon}(x)  \,\,\,& \textrm{for} \,\,\, x \in
\Omega,
\end{array} \right.
\end{eqnarray}
where $\varepsilon=\varepsilon_m.$  The notion of a solution
$u_\varepsilon$ for problem (\ref{E:2.1}) can be defined in a
similar way as in the Definition~\ref{v:sup}.
\begin{theorem}\label{Th1} Problem (\ref{E:2.1})
has a unique solution in $Q_T$ for small values of $T.$
\end{theorem}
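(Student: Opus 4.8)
The plan is to prove local existence and uniqueness for the auxiliary problem \eqref{E:2.1} by reformulating it as a fixed-point problem and applying the contraction mapping principle. The crucial observation is that because $u_{0\varepsilon}(x) \geq \varepsilon > 0$, any solution that stays close to $u_{0\varepsilon}$ on a short time interval will remain bounded below by some positive constant, say $\varepsilon/2$, and bounded above by some constant $M$. On the region where the solution takes values in $[\varepsilon/2, M]$, the nonlinearities $u \mapsto c(x,t)(u^p - \varepsilon^p)$ and $u \mapsto u^l$ are Lipschitz continuous (since $u^p$ and $u^l$ are $C^1$ away from zero for any $p, l > 0$). This is precisely why the problem was regularized with the positive shift: it restores the Lipschitz property that the original problem lacks when $p < 1$ or $l < 1$.

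\emph{Reformulation via Green's function.} Let $G(x,y,t,\tau)$ denote the Green's function associated with the heat operator $\partial_t - \Delta$ under homogeneous Neumann boundary conditions on $\Omega$. Using $G$, I would convert \eqref{E:2.1} into the integral equation
\begin{equation*}
u(x,t) = \int_\Omega G(x,y,t,0) u_{0\varepsilon}(y)\,dy
- \int_0^t\!\!\int_\Omega G(x,y,t,\tau) c(y,\tau)\bigl(u^p - \varepsilon^p\bigr)\,dy\,d\tau
+ \int_0^t\!\!\int_{\partial\Omega} G(x,y,t,\tau)\!\int_\Omega k(y,z,\tau) u^l(z,\tau)\,dz\,dS_y\,d\tau,
\end{equation*}
where the boundary integral encodes the Neumann data via the standard single-layer representation. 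Denote the right-hand side by $(\Phi u)(x,t)$. A function $u$ solving this integral equation, with sufficient regularity, is a solution of \eqref{E:2.1} in the sense of Definition~\ref{v:sup}; the equivalence between the classical and integral formulations follows from standard parabolic regularity theory applied to the smooth data $c \in C^\alpha_{loc}$, $k$ continuous, and $u_{0\varepsilon} \in C^1(\overline\Omega)$.

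\emph{Contraction argument.} I would work in the complete metric space
\begin{equation*}
X_T = \Bigl\{ u \in C(\overline\Omega \times [0,T]) : \tfrac{\varepsilon}{2} \leq u(x,t) \leq M,\ \|u - u_{0\varepsilon}\|_\infty \leq \delta \Bigr\},
\end{equation*}
for suitably chosen $M$ and $\delta$, equipped with the supremum norm. First I would verify that $\Phi$ maps $X_T$ into itself for $T$ small: the leading term $\int_\Omega G(x,y,t,0)u_{0\varepsilon}(y)\,dy$ tends to $u_{0\varepsilon}(x)$ uniformly as $t \to 0$ by properties of the Neumann heat kernel, while the two time-integral terms are $O(T)$ and $O(\sqrt{T})$ respectively (the boundary term carrying the weaker power of $T$ from the surface-integral estimate). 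Next, using the Lipschitz bounds for $u^p$ and $u^l$ valid on $[\varepsilon/2, M]$, I would show $\|\Phi u - \Phi v\|_\infty \leq C(T)\|u - v\|_\infty$ with $C(T) \to 0$ as $T \to 0$, so that $\Phi$ is a contraction for $T$ small enough. Banach's fixed-point theorem then yields a unique $u_\varepsilon \in X_T$, and parabolic bootstrap regularity upgrades it to a classical solution in $C^{2,1}(Q_T) \cap C^{1,0}(Q_T \cup \Gamma_T)$, establishing both existence and uniqueness.

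\emph{The main obstacle} I expect is the careful treatment of the boundary integral term: obtaining the $O(\sqrt{T})$ estimate and the corresponding Lipschitz bound requires sharp control of the Neumann Green's function and its normal derivative near the boundary, together with the singular kernel estimates $\int_0^t\int_{\partial\Omega}|G(x,y,t,\tau)|\,dS_y\,d\tau \leq C\sqrt{t}$. Establishing the lower bound $u \geq \varepsilon/2$ is also delicate, but the positive shift $c(x,t)\varepsilon^p$ together with $u_{0\varepsilon} \geq \varepsilon$ ensures it via the maximum principle once $T$ is small; this is what makes the regularized problem tractable where the original \eqref{v:u}--\eqref{v:n} is not.
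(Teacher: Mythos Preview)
Your approach is correct but differs from the paper's in a meaningful way. You apply the Banach contraction principle directly to the full nonlinear integral operator $\Phi$, exploiting the fact that on the slab $[\varepsilon/2,M]$ both $u\mapsto u^p$ and $u\mapsto u^l$ are Lipschitz; this is elementary and delivers existence and (local) uniqueness in one stroke. The paper instead decouples the problem: for each $v$ in a closed convex set $B$ it freezes the boundary term at $v^l$, solves the resulting problem by classical linear/semilinear theory, and defines $Av=u$; it then shows $A:B\to B$ is continuous with relatively compact image (via Green's function estimates and Ascoli--Arzel\`a) and invokes the Schauder--Tychonoff fixed-point theorem. Existence thus comes from compactness rather than contraction, and uniqueness is obtained separately from a comparison principle for \eqref{E:2.1}. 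Your route is shorter and avoids compactness, but note that contraction only yields uniqueness within $X_T$; to upgrade to genuine uniqueness you still need to argue that any classical solution of \eqref{E:2.1} satisfies the a~priori bound $u\geq\varepsilon$ (for which the constant $\varepsilon$ is a subsolution, so a maximum-principle or comparison argument is implicitly required---the same ingredient the paper invokes explicitly). The paper's Schauder approach, by contrast, does not need the Lipschitz constants at all for existence, which is why it generalises more readily to settings where a contraction estimate is unavailable.
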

\begin{proof}
We start the proof with the construction of a supersolution of
(\ref{E:2.1}). Let $\sup_{\Omega} u_{0\varepsilon}(x) \leq M.$
Denote $ K=\sup_{\partial \Omega \times Q_1}k(x,y,t)$ and
introduce an auxiliary function $\psi (x)$ with the following
properties:
 $$
 \psi (x)\in C^2(\overline{\Omega}),\,
 \inf_{\Omega}\psi (x) \ge 1, \,
\inf_{\partial\Omega}\frac{\partial\psi (x)}{\partial \nu}  \ge
KM^{l-1}\max\{1,\exp(l-1)\} \int_{\Omega}\psi^l(y)dy.
$$
Set $\alpha = \sup_{\Omega}\Delta \psi (x).$ Then it is not
difficult to check that
$$
w(x,t)=M\exp (\alpha t)\psi (x)
$$
is a supersolution of (\ref{E:2.1}) in $Q_T$ if $T  \leq \min \{
1/\alpha,1 \}.$

To prove the existence of a solution for (\ref{E:2.1}) we
introduce the set
$$
B = \{ h(x,t) \in C(\overline{Q_T}): \varepsilon \leq h(x,t) \leq
w(x,t), \, h(x,0)= u_{0\varepsilon}(x)\}
$$
and consider the following problem
\begin{eqnarray} \left\{ \begin{array}{ll}
u_{t}= \Delta u - c(x,t) u^p + c(x,t) \varepsilon^p
\,\,\,&\textrm{for} \,\,\, x \in \Omega,
\,\,\,\,\, 0<t<T,  \\
\frac{\partial u(x,t)}{\partial\nu}  = \int_{\Omega} k(x,y,t) v^l
(y,t) dy  \,\,\,
& \textrm{for} \,\,\, x \in \partial \Omega, \,\, 0 \leq t<T, \label{E:2.2} \\
u (x,0)= u_{0\varepsilon}(x) \,\,\,& \textrm{for} \,\,\, x \in
\Omega,
\end{array} \right.
\end{eqnarray}
where $ v \in B.$ It is obvious, $B$ is a nonempty convex subset
of $C(\overline{Q_T}).$ By classical theory \cite{LSU} problem
(\ref{E:2.2}) has a solution $u \in C^{2,1}(Q_T) \cap
C^{1,0}(\overline{Q}_T ).$ Let us call $Av=u.$ In order to show
that $A$ has a fixed point in $B$ we verify that $A$ is a
continuous mapping from $B$ into itself such that $AB$ is
relatively compact. Thanks to a comparison principle for
(\ref{E:2.2}) we have that $A$ maps $B$ into itself.

Let $G(x,y;t)$ denote the Green's function for a heat equation
given by
$$
u_t - \Delta u =0 \,\,\, \textrm{for}\,\,\, x \in \Omega, \, t>0
$$
with homogeneous Neumann boundary condition. Then $u(x,t)$ is a
solution of (\ref{E:2.2}) in $Q_T$ if and only if
\begin{eqnarray}\label{E:2.5}
\hspace{-0.7cm} u(x,t) &=& \int_\Omega G(x,y;t)
u_{0\varepsilon}(y) \, dy + \int_0^t\int_{\Omega} G(x,y;t-\tau)
c(y,\tau) (\varepsilon^p - u^p(y,\tau) )\, dy d\tau
\nonumber\\
&+&\int_0^t\int_{\partial\Omega}  G (x,\xi;t-\tau) \int_{\Omega}
k(\xi,y,\tau) v^l (y,\tau)\, dy  \, dS_{\xi} d\tau
\end{eqnarray}
for $(x,t) \in Q_T.$

We claim that $A$ is continuous. In fact let $v_k$  be a sequence
in $B$ converging to $v \in B$ in $C(\overline{Q_T}).$ Denote
$u_k=Av_k.$ Then by (\ref{E:2.5}) we see that
\begin{eqnarray*}
|u-u_k|  &\leq& \theta \sup_{Q_T} |u-u_k| \int_0^t\int_{\Omega}
G(x,y;t-\tau)
c(y,\tau)\, dy d\tau \\
&+& \sup_{Q_T} |v^l-v^l_k| \int_0^t\int_{\partial\Omega}
 G (x,\xi;t-\tau) \int_{\Omega}
k(\xi,y,\tau) \, dy dS_{\xi} d\tau ,
\end{eqnarray*}
where $\theta = p \max \{\varepsilon^{p-1},\sup_{Q_T}w^{p-1} \}.$
We note that (see \cite{Hu_Yin1})
$$
\theta \sup_{Q_T}\int_0^t\int_{\Omega} G(x,y;t-\tau) c(y,\tau)\,
dy d\tau < 1
$$
for small values of $T.$ Now we can conclude that $u_k$ converges
to $u_{\varepsilon}$ in $C(\overline{Q_T}).$

The equicontinuity of $AB$  follows from (\ref{E:2.5}) and the
properties of the Green's function (see, for example,
\cite{PaoBook}). The Ascoli-Arzel\'a theorem guarantees the
relative compactness of $AB.$ Thus we are able to apply the
Schauder-Tychonoff fixed point theorem and conclude that $A$ has a
fixed point in $B$ if $T$ is small. Now if $u_{\varepsilon}$ is a
fixed point of $A,\,$ $u_{\varepsilon} \in C^{2,1}(Q_T) \cap
C^{1,0}(\overline{Q}_T )$ and it is a solution of (\ref{E:2.1}) in
$Q_T.$ Uniqueness of the solution follows from a comparison
principle for (\ref{E:2.1}) which can be proved in a similar way
as in the next section.
\end{proof}

Now, let $\varepsilon_2 > \varepsilon_1.$  Then it is easy to see
that $u_{\varepsilon_2}(x,t)$ is a supersolution of problem
(\ref{E:2.1}) with $\varepsilon = \varepsilon_1.$ Applying to
problem (\ref{E:2.1}) a comparison principle we have $
u_{\varepsilon_1}(x,t) \leq u_{\varepsilon_2}(x,t).$ Using the
last inequality and the continuation principle of solutions we
deduce that the existence time of $u_\varepsilon$ does not
decrease as $\varepsilon \searrow 0.$ Taking $\varepsilon \to 0$,
we get $u_m(x,t)=\lim_{\varepsilon \to 0} u_\varepsilon(x,t)\geq
0$ and $u_m(x,t)$ exists in $Q_T$ for some $T>0.$ By dominated
convergence theorem, $u_m(x,t)$ satisfies the following equation
\begin{eqnarray*}
\hspace{-0.7cm}  u_m(x,t) &=& \int_\Omega G(x,y;t) u_{0}(y) \, dy
- \int_0^t\int_{\Omega} G(x,y;t-\tau) c(y,\tau) u_m^p(y,\tau) )\,
dy d\tau\\
&+&\int_0^t\int_{\partial\Omega} G (x,\xi;t-\tau) \int_{\Omega}
k(\xi,y,\tau) u_m^l (y,\tau)\, dy d\xi d\tau
\end{eqnarray*}
for $(x,t) \in Q_T.$ Further, the interior regularity of
$u_m(x,t)$ follows from the continuity of $u_m(x,t)$ in $Q_T$ and
the properties of the Green's function. Obviously, $u_m(x,t)$
satisfies (\ref{v:u})--(\ref{v:n}).
\begin{theorem}\label{Th2}
Problem (\ref{v:u})--(\ref{v:n}) has a solution in $Q_T$ for small
values of  $\,T.$
\end{theorem}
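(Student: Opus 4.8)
The plan is to obtain the solution of \eqref{v:u}--\eqref{v:n} as the limit of the approximating solutions $u_\varepsilon$ constructed in Theorem~\ref{Th1}, so the bulk of the work has already been done in the preceding analysis. First I would invoke the monotonicity and uniform time-of-existence established just above the statement: for $\varepsilon_2>\varepsilon_1$ the function $u_{\varepsilon_2}$ is a supersolution of \eqref{E:2.1} with $\varepsilon=\varepsilon_1$, so the comparison principle gives $u_{\varepsilon_1}\le u_{\varepsilon_2}$, and the continuation principle shows the existence time does not shrink as $\varepsilon\searrow0$. Hence the pointwise limit $u_m(x,t)=\lim_{\varepsilon\to0}u_\varepsilon(x,t)$ is well defined and nonnegative on some $Q_T$ with $T>0$.

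Next I would record that each $u_\varepsilon$ satisfies the integral identity \eqref{E:2.5} with $v=u_\varepsilon$. Since $\varepsilon\le u_\varepsilon\le w$ with $w$ independent of $\varepsilon$, the integrands are dominated uniformly; the dominated convergence theorem then lets me pass to the limit in every term, yielding the limiting Green's-function representation for $u_m$ displayed immediately before the statement. This identity is the crucial bridge: from it the continuity of $u_m$ on $Q_T$ follows, and then the standard smoothing properties of the Neumann Green's function $G(x,y;t)$ (as in \cite{PaoBook, LSU}) upgrade $u_m$ to $C^{2,1}(Q_T)\cap C^{1,0}(Q_T\cup\Gamma_T)$, giving the interior regularity asserted in the passage. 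Differentiating the representation recovers the differential equation \eqref{v:u}, the Neumann condition \eqref{v:g}, and the initial condition $u_m(x,0)=u_0(x)$, so $u_m$ is simultaneously a subsolution and a supersolution, hence a solution in the sense of Definition~\ref{v:sup}.

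The main obstacle to watch is the passage to the limit in the absorption term $c(y,\tau)u_\varepsilon^p$ and the boundary term $k(\xi,y,\tau)u_\varepsilon^l$ when $p<1$ or $l<1$, where these nonlinearities are merely H\"older and not Lipschitz. Here the two-sided bound $\varepsilon\le u_\varepsilon\le w$ is exactly what saves the argument for the integral identity: the powers $u_\varepsilon^p$ and $u_\varepsilon^l$ are controlled above by $w^p$ and $w^l$, which are integrable against $G$, so dominated convergence applies even though no Lipschitz estimate is available. I would therefore emphasize that the uniform supersolution $w$ constructed in Theorem~\ref{Th1} furnishes the required domination, and that the convergence $u_{0\varepsilon}\to u_0$ in $C(\overline\Omega)$ handles the initial-data term. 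With these points in place, the argument preceding the statement already verifies that $u_m$ solves \eqref{v:u}--\eqref{v:n} on $Q_T$, which is precisely the assertion of Theorem~\ref{Th2}.
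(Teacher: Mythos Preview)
Your proposal is correct and follows essentially the same route as the paper: the proof of Theorem~\ref{Th2} is precisely the paragraph preceding its statement, where $u_m=\lim_{\varepsilon\to0}u_\varepsilon$ is obtained via the monotonicity of $\{u_\varepsilon\}$, dominated convergence is used to pass to the limit in the Green's-function representation, and regularity is read off from the properties of $G$. Your added emphasis on the uniform upper barrier $w$ as the dominating function and on the non-Lipschitz obstacle is a useful clarification, but the overall argument is the paper's own.
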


\section{Uniqueness and nonuniqueness}\label{cp}
We start this section with a comparison principle for problem
(\ref{v:u})--(\ref{v:n}) which is used below.

\begin{theorem}\label{Th3} Let $\overline{u}$ and $\underline{u}$ be a
 supersolution and a  subsolution of problem
(\ref{v:u})--(\ref{v:n}) in $Q_T,$ respectively. Suppose that
$\underline{u}(x,t)> 0$ or $\overline{u}(x,t) > 0$ in ${Q}_T\cup
\Gamma_T$ if $l < 1$. Then $ \overline{u}(x,t) \geq
\underline{u}(x,t) $ in ${Q}_T\cup \Gamma_T.$
\end{theorem}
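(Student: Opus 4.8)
The plan is to work with the difference $w=\underline{u}-\overline{u}$ and to show $w\le 0$ by an energy estimate combined with Gronwall's inequality. Since the closure is open at $t=T$, I would first fix $t_0<T$ and prove the inequality on the compact set $\overline{\Omega}\times[0,t_0]$, afterwards letting $t_0\to T$. On this set $\overline{u}$ and $\underline{u}$ are continuous, hence bounded by some $\Lambda$; moreover, under the hypothesis for $l<1$, whichever of $\underline{u}$, $\overline{u}$ is positive attains a positive minimum $\delta>0$ there. Subtracting the defining inequalities of Definition~\ref{v:sup} gives $w_t-\Delta w\le -c(x,t)(\underline{u}^p-\overline{u}^p)$ in $Q_{t_0}$, the boundary inequality $\partial w/\partial\nu\le\int_\Omega k(x,y,t)(\underline{u}^l-\overline{u}^l)\,dy$ on $S_{t_0}$, and $w(x,0)\le0$. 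The key sign observation is that on the set $\{w>0\}=\{\underline{u}>\overline{u}\}$ one has $\underline{u}^p\ge\overline{u}^p$ (as $p>0$) and $c\ge0$, so the absorption term $-c(\underline{u}^p-\overline{u}^p)$ is nonpositive and may simply be discarded; no positivity is needed to control it, which is why the statement only constrains the case $l<1$.

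Next I would establish a one-sided Lipschitz bound for the boundary nonlinearity on $\{w>0\}$: there is a constant $L$ with $\underline{u}^l-\overline{u}^l\le L\,w_+$, where $w_+=\max(w,0)$. For $l\ge1$ this is immediate from the mean value theorem, since $s\mapsto s^l$ has derivative bounded by $l\Lambda^{l-1}$ on $[0,\Lambda]$. For $l<1$ it is exactly here that the positivity hypothesis enters: if $\overline{u}\ge\delta$, the intermediate point in the mean value theorem exceeds $\delta$ and $s^{l-1}\le\delta^{l-1}$ gives the bound; if instead $\underline{u}\ge\delta$, I would use that $b\mapsto \underline{u}^l-b^l$ is convex on $[0,\underline{u}]$ and hence lies below its chord, yielding $\underline{u}^l-\overline{u}^l\le \underline{u}^{l-1}(\underline{u}-\overline{u})\le\delta^{l-1}w_+$. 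Because points with $w\le0$ contribute nonpositively to the flux, it follows that $\int_\Omega k(\underline{u}^l-\overline{u}^l)\,dy\le L\int_\Omega k\,w_+\,dy$ on $\partial\Omega$.

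The conclusion then comes from an energy estimate. Multiplying the differential inequality by $w_+$, integrating over $\Omega$, discarding the absorption term and integrating by parts gives
\begin{equation*}
\frac{1}{2}\frac{d}{dt}\int_\Omega w_+^2\,dx\le\int_{\partial\Omega}w_+\frac{\partial w}{\partial\nu}\,dS-\int_{\{w>0\}}|\nabla w|^2\,dx .
\end{equation*}
Using the boundary bound and $k\le K$ on $\overline{\Omega}\times[0,t_0]$, the boundary integral is at most $LK\big(\int_{\partial\Omega}w_+\,dS\big)\big(\int_\Omega w_+\,dy\big)$. I would bound the two factors by $\|w_+\|_{L^2(\partial\Omega)}$ and $\|w_+\|_{L^2(\Omega)}$ via Cauchy--Schwarz, then apply the trace (Ehrling) inequality $\|w_+\|_{L^2(\partial\Omega)}^2\le\epsilon\|\nabla w_+\|_{L^2(\Omega)}^2+C_\epsilon\|w_+\|_{L^2(\Omega)}^2$ and choose $\epsilon$ small enough to absorb the gradient contribution into the retained term $-\int_{\{w>0\}}|\nabla w|^2\,dx$. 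This leaves $\frac{d}{dt}\int_\Omega w_+^2\,dx\le C\int_\Omega w_+^2\,dx$. Since $w_+(\cdot,0)=0$, Gronwall's inequality forces $\int_\Omega w_+^2\,dx\equiv0$, i.e. $w\le0$, so $\overline{u}\ge\underline{u}$ on $\overline{\Omega}\times[0,t_0]$, and letting $t_0\to T$ gives the claim on $Q_T\cup\Gamma_T$.

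The main obstacle is the nonlocal boundary flux: it ties the normal derivative at each boundary point to the integral of $w_+$ over the whole domain, so neither the classical Hopf lemma nor a naive pointwise maximum principle closes the argument. The device that overcomes this is retaining the Dirichlet energy $\int_\Omega|\nabla w|^2\,dx$ and using the trace inequality to dominate the boundary coupling by it together with a lower-order term amenable to Gronwall. The secondary technical point, the failure of the Lipschitz condition for $s^l$ near $s=0$ when $l<1$, is precisely what the positivity assumption in the statement is designed to repair.
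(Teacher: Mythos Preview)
Your argument is correct and proceeds along a genuinely different line from the paper's proof. The paper argues by duality: it introduces the approximating problems~(\ref{E:2.1}), whose solutions $u_\varepsilon$ satisfy $u_\varepsilon\ge\varepsilon$, linearises both nonlinearities via the mean value theorem (so that $u_\varepsilon^p-\overline{u}^p=p\theta_1^{p-1}(u_\varepsilon-\overline{u})$ with a bounded coefficient), tests against the solution of a backward adjoint problem~(\ref{E:1.7}), and obtains an $L^1$ estimate for $w_+$ to which Gronwall applies; the claim $\underline{u}\le\overline{u}$ is then obtained by sandwiching through $u_m=\lim_{\varepsilon\to0}u_\varepsilon$. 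Your route is an $L^2$ energy method: you test directly with $w_+$, keep the Dirichlet term $-\int_{\{w>0\}}|\nabla w|^2$, and control the nonlocal boundary flux via the trace inequality. The key simplification in your approach is the observation that on $\{w>0\}$ the absorption contribution $-c(\underline{u}^p-\overline{u}^p)w_+$ is nonpositive regardless of $p$, so no linearisation of $s^p$ is needed and the whole approximation apparatus~(\ref{E:2.1}) can be dispensed with; the paper needs $u_\varepsilon\ge\varepsilon$ precisely because its duality step requires $p\theta_1^{p-1}$ to be bounded when $p<1$. Conversely, the paper's $L^1$/duality framework is the classical one and tends to be more forgiving of low regularity, whereas your energy estimate implicitly uses Green's formula for $w_+\Delta w$ with $w\in C^{2,1}(Q_T)\cap C^{1,0}(\overline{Q}_T)$, which is easily justified by exhausting $\Omega$ with interior subdomains but is worth stating. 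Your handling of the $l<1$ case via the chord inequality $\underline{u}^l-\overline{u}^l\le\underline{u}^{l-1}(\underline{u}-\overline{u})$ when $\underline{u}\ge\delta$ is a neat and correct substitute for the mean value theorem.
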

\begin{proof}
Suppose that $l \geq 1.$ Let $T_0 \in (0,T)$ and
$u_{0\varepsilon}(x)$ have the same properties as in previous
section but $u_{0\varepsilon}(x) \to \underline{u} (x,0)$ as
$\varepsilon \to 0.$ We can construct a solution $u_m(x,t)$ of
(\ref{v:u})--(\ref{v:n}) with $u_0(x)=\underline{u} (x,0)$ in the
following way $u_m(x,t)=\lim_{\varepsilon \to 0}
u_\varepsilon(x,t)$ where $u_\varepsilon(x,t)$ is a solution of
(\ref{E:2.1}). To establish theorem we shall show that
\begin{equation}\label{E:1.1}
\underline{u}(x,t) \leq u_m(x,t) \leq \overline{u}(x,t) \,\,\,
 \textrm{in} \,\,\, \overline{Q}_{T_0}  \,\,\,
 \textrm{for any} \,\,\, T_0.
\end{equation}
We prove the second inequality in (\ref{E:1.1}) only since the
proof of the first one is similar. Let $\varphi (x,t) \in
C^{2,1}(\overline{Q}_{T_0})$ be a nonnegative function such that
$$
\frac{\partial \varphi (x,t)}{\partial \nu} = 0, \; (x,t) \in
S_{T_0}.
$$
If we multiply the first equation in (\ref{E:2.1}) by $\varphi
(x,t)$ and then integrate over $Q_{t}$ for $ 0 < t < T_0,$ we get
\begin{eqnarray}\label{E:1.3}
\hspace{-0.7cm}\int_\Omega u_\varepsilon(x,t) \varphi(x,t)\, dx
&\leq& \int_\Omega u_\varepsilon(x,0)\varphi (x,0)\, dx +
\varepsilon^p \int_0^t\int_{\Omega} c(x,\tau) \varphi \, dx d\tau
\nonumber\\
&+& \int_0^t\int_{\Omega} (u_\varepsilon \varphi_{\tau} +
u_\varepsilon \Delta \varphi - c(x,\tau) u_\varepsilon^p \varphi)
\,
dx d\tau \nonumber \\
&+&\int_0^t\int_{\partial\Omega} \varphi (x,\tau) \int_\Omega
k(x,y,\tau) u_\varepsilon^l(y,\tau)\, dy  \, dS_x d\tau,
\end{eqnarray}
On the other hand, $\overline{u}$ satisfies (\ref{E:1.3}) with
reversed inequality and with $\varepsilon =0.$ Set
$w(x,t)=u_\varepsilon(x,t) - \overline{u}(x,t).$ Then $w(x,t)$
satisfies
\begin{eqnarray}\label{E:1.5}
\hspace{-0.9cm}\int_\Omega w(x,t)\varphi (x,t)\, dx &\leq&
\int_\Omega w(x,0)\varphi (x,0)\, dx + \varepsilon^p
\int_0^t\int_{\Omega} c(x,\tau) \varphi \, dx d\tau\nonumber\\
&+& \int_0^t\int_{\Omega} (\varphi_{\tau} + \Delta \varphi -
 c(x,\tau) p\theta_1^{p-1} \varphi)) w \, dx d\tau \nonumber \\
&+& \int_0^t\int_{\partial\Omega} \varphi (x,\tau) \int_\Omega
k(x,y,\tau) l \theta_2^{l-1}w(y,\tau)\, dy dS_x d\tau, \,
\end{eqnarray}
where  $\theta_1$ and $\theta_2$ are some continuous functions
between $u_\varepsilon$ and $\overline{u}.$ Note here that by
hypotheses for $c(x,t)$, $k(x,y,t)$, $u_\varepsilon(x,t)$ and
$\overline{u}(x,t)$, we have
\begin{eqnarray}\label{E:1.6}
&&0 \leq c(x,t) \leq M, \,\,\,  0 \leq \overline{u}(x,t) \leq M
\,\,\, \varepsilon \leq u_\varepsilon(x,t) \leq M \,\,\,
\textrm{in}\,\,  \overline{Q}_{T_0} \nonumber \\
&&\textrm{and}\,\,\,0 \leq k(x,y,t) \leq M \,\,\, \textrm{in}\,\,
\partial \Omega \times \overline{Q}_{T_0},
\end{eqnarray}
where $M$ is some positive constant. Then, it is easy to see from
(\ref{E:1.6}) that $\theta_1^{p-1}$ and $\theta_2^{l-1}$ are
positive and bounded functions in $\overline{Q}_{T_0}$ and
moreover, $\theta_2^{l-1} \leq M^{l-1}.$ Define a sequence $\{a_n
\}$ in the following way: $\, a_n \in C^\infty
(\overline{Q}_{T_0}),\,$ $ a_n \geq 0 \,$ and $\, a_n \to c(x,t)p
\theta_1^{p-1}\,$ as $\, n \to \infty \,$ in $L^1({Q}_{T_0}).$
Now, consider a backward problem given by
\begin{eqnarray}\label{E:1.7}
\left\{ \begin{array}{ll} \varphi_{\tau} + \Delta \varphi - a_n
\varphi = 0 \,\,\, & \textrm{for} \,\,\,x \in \Omega, \,\,\,
0<\tau<t, \\
\frac{\partial \varphi (x,\tau)}{\partial \nu} = 0 \,\,\,&
\textrm{for} \,\,\, x \in
\partial\Omega, \,\,\, 0 \leq \tau < t, \\
\varphi(x,t)= \psi (x) \,\,\,& \textrm{for} \,\,\,x \in \Omega,
\end{array} \right.
\end{eqnarray}
where $\psi (x)\in C_0^\infty (\Omega)$ and $ 0 \leq \psi (x) \leq
1.$ Denote a solution of (\ref{E:1.7}) as $\varphi_n (x,\tau).$
Then by the standard theory for linear parabolic equations (see
\cite{LSU}, for example), we find that  $\varphi_n \in
C^{2,1}(\overline{Q}_{t}),$  $0 \leq \varphi_n (x,\tau) \leq 1$ in
$\overline{Q}_{t}.$  Putting $\varphi = \varphi_n$ in
(\ref{E:1.5}) and passing then to the limit as $n \to \infty$ we
infer
\begin{equation}\label{E:1.8}
\int_\Omega w(x,t)\psi (x)\, dx \leq \int_\Omega w(x,0)_+ \, dx +
\varepsilon^p M  T_0 |\Omega| +  l M^l |\partial\Omega| \int_0^t
\int_\Omega w(y,\tau)_+ \, dy d\tau,
\end{equation}
where $w_+ = \max \{w,0 \},$ $|\partial\Omega|$ and $|\Omega|$ are
the Lebesgue measures of $\partial\Omega$ in $\mathbb R^{n-1}$ and
$\Omega$ in $\mathbb R^n,$ respectively. Since (\ref{E:1.8}) holds
for every $\psi (x),$ we can choose a sequence $\{ \psi_n \}$
converging on $\Omega$ to $\psi (x) =1$ if $w(x,t) > 0$ and $\psi
(x) = 0$ otherwise. Hence, from (\ref{E:1.8}) we get
\begin{equation*}
\int_\Omega w(x,t)_+ \, dx \leq \int_\Omega w(x,0)_+ \, dx +
\varepsilon^p M  T_0 |\Omega| + l M^l |\partial\Omega| \int_0^t
\int_\Omega w(y,\tau)_+ \, dy d\tau.
\end{equation*}
Applying now Gronwall's inequality and passing to the limit
$\varepsilon \to 0,$ the conclusion of this theorem follows for $l
\geq 1.$ For the case $l<1$ we can consider
$w(x,t)=\underline{u}(x,t) - \overline{u}(x,t)$ and prove the
theorem in a similar way using the positiveness of a subsolution
or a supersolution.
\end{proof}

\begin{corollary}\label{Cor1}
Problem (\ref{v:u})--(\ref{v:n}) has a maximal solution in $Q_T$
for small values of  $\,T.$
\end{corollary}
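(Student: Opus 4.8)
The plan is to show that the particular solution $u_m = \lim_{\varepsilon \to 0} u_\varepsilon$ constructed in Section~2 is in fact the maximal solution. The mechanism is that each approximating solution $u_\varepsilon$ of the auxiliary problem (\ref{E:2.1}) is itself a supersolution of the original problem (\ref{v:u})--(\ref{v:n}), so every solution lies below it, and hence below the limit.

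First I would verify the supersolution inequalities for $u_\varepsilon$. For the equation, since $c(x,t) \geq 0$ and $\varepsilon^p > 0$, the identity $u_{\varepsilon,t} = \Delta u_\varepsilon - c u_\varepsilon^p + c\varepsilon^p$ immediately yields $u_{\varepsilon,t} \geq \Delta u_\varepsilon - c u_\varepsilon^p$, which is (\ref{v:sup^u}). The boundary condition in (\ref{E:2.1}) coincides with (\ref{v:g}), so (\ref{v:sup^g}) holds with equality. For the initial data, the monotonicity $u_{0\varepsilon_i} \geq u_{0\varepsilon_j}$ for $\varepsilon_i > \varepsilon_j$ together with $u_{0\varepsilon} \to u_0$ as $\varepsilon \to 0$ gives $u_{0\varepsilon}(x) \geq u_0(x)$, hence (\ref{v:sup^n}). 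Thus $u_\varepsilon$ is a supersolution of (\ref{v:u})--(\ref{v:n}) in $Q_T$.

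Next, let $v$ be any solution of (\ref{v:u})--(\ref{v:n}) in $Q_T$; in particular $v$ is a subsolution. Since $u_\varepsilon \geq \varepsilon > 0$ in $\overline{Q}_T$, the positivity hypothesis required in Theorem~\ref{Th3} for the case $l < 1$ is satisfied by the supersolution $u_\varepsilon$. Applying the comparison principle (Theorem~\ref{Th3}) with $\overline{u} = u_\varepsilon$ and $\underline{u} = v$ gives $v(x,t) \leq u_\varepsilon(x,t)$ in $Q_T \cup \Gamma_T$. Passing to the limit $\varepsilon \to 0$ and using $u_\varepsilon \to u_m$ (monotone convergence), we obtain $v(x,t) \leq u_m(x,t)$ in $Q_T \cup \Gamma_T$ for every solution $v$, which is precisely the defining property in Definition~\ref{v:max1}. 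Therefore $u_m$ is the maximal solution of (\ref{v:u})--(\ref{v:n}) in $Q_T$ for small $T$.

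The only point requiring care is confirming that the approximations $u_\varepsilon$ genuinely qualify as supersolutions to which Theorem~\ref{Th3} applies uniformly in both regimes $l \geq 1$ and $l < 1$. The crucial feature is the uniform lower bound $u_\varepsilon \geq \varepsilon$: it is exactly this positivity, built into the construction of (\ref{E:2.1}), that activates the positivity hypothesis of the comparison principle in the non-Lipschitzian case $l < 1$ and makes the passage to the limit legitimate. Everything else is a direct reading of the already-established existence construction and comparison principle.
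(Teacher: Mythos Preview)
Your proof is correct and follows exactly the paper's own argument: show that each $u_\varepsilon$ is a supersolution of (\ref{v:u})--(\ref{v:n}), apply Theorem~\ref{Th3} to get $v \leq u_\varepsilon$ for any solution $v$, and pass to the limit. You supply more detail than the paper (which compresses the whole thing into four sentences), in particular the explicit verification that $u_\varepsilon$ is a supersolution and the observation that $u_\varepsilon \geq \varepsilon > 0$ furnishes the positivity hypothesis of Theorem~\ref{Th3} when $l<1$, but the route is the same.
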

\begin{proof}
In the previous section we prove the existence of a local solution
$u_m(x,t)=\lim_{\varepsilon \to 0} u_\varepsilon(x,t).$ Let
$v(x,t)$ be any other solution of (\ref{v:u})--(\ref{v:n}) in
$Q_T.$ By Theorem~\ref{Th3} we have $u_\varepsilon(x,t) \geq
v(x,t).$ Taking $\varepsilon \to 0$, we conclude $u_m(x,t) \geq
v(x,t).$ Obviously, $u_m(x,t)$ is a maximal solution of
(\ref{v:u})--(\ref{v:n}) in $Q_T.$
\end{proof}
Next lemma shows the positiveness of all nontrivial solutions for
$t>0$ if $p \geq 1.$
\begin{lemma}\label{Lem1}
Let $u_0$ is a nontrivial function in $\Omega,$  $p \geq 1$ or
$c(x,t) \equiv 0.$ Suppose $u$ is a solution of
(\ref{v:u})--(\ref{v:n}) in $Q_T.$ Then $u>0$ in ${Q}_T \cup S_T.$
\end{lemma}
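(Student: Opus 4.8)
The plan is to recast equation~(\ref{v:u}) as a \emph{linear} parabolic equation with a bounded, nonnegative zeroth-order coefficient, and then combine the strong maximum principle in the interior with the parabolic boundary point (Hopf) lemma on the lateral boundary. Fix $T_0 \in (0,T)$; since $u$ is continuous on the compact set $\overline{Q}_{T_0}$ we have $0 \leq u \leq M$ there for some $M>0$. As $u\geq 0$, I would set $b(x,t)=c(x,t)\,u^{p-1}(x,t)$ in the case $p\geq 1$ (and simply $b\equiv 0$ in the case $c\equiv 0$), so that in either case
$$
u_t - \Delta u + b(x,t)\,u = 0 \quad \textrm{in } Q_{T_0},
$$
with $b\geq 0$ and $b\in L^\infty(Q_{T_0})$. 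The hypothesis $p\geq 1$ is exactly what guarantees $u^{p-1}\leq M^{p-1}$ stays bounded as $u\to 0$; for $p<1$ this coefficient would be singular at the zero set of $u$, which is precisely why the lemma must exclude that case unless $c\equiv 0$.

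First I would prove interior positivity. Suppose, for contradiction, that $u(x_0,t_0)=0$ at some $(x_0,t_0)\in Q_{T_0}$. Since $u\geq 0$, this is a nonpositive minimum attained at an interior point, so the strong maximum principle for the operator $\partial_t-\Delta+b$ (using $b\geq 0$ and connectedness of $\Omega$) forces $u\equiv 0$ on $\Omega\times(0,t_0]$, and hence on $\overline\Omega\times[0,t_0]$ by continuity. In particular $u(\cdot,0)=u_0\equiv 0$, contradicting the assumption that $u_0$ is nontrivial. Therefore $u>0$ in $Q_{T_0}$.

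Next I would treat the lateral boundary. Suppose $u(x_0,t_0)=0$ for some $(x_0,t_0)\in S_{T_0}$. By the previous step $u>0=u(x_0,t_0)$ at every interior point, so $(x_0,t_0)$ is a boundary minimum; since $\partial\Omega$ is smooth it satisfies the interior ball condition, so the parabolic Hopf boundary point lemma (again using $b\geq 0$ and the minimum value $0\leq 0$) yields the strict inequality $\partial u(x_0,t_0)/\partial\nu<0$. This contradicts the boundary condition~(\ref{v:g}), which gives $\partial u(x_0,t_0)/\partial\nu=\int_{\Omega}k(x_0,y,t_0)u^l(y,t_0)\,dy\geq 0$. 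Hence $u>0$ on $S_{T_0}$ as well, and since $T_0\in(0,T)$ was arbitrary, $u>0$ in $Q_T\cup S_T$.

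I expect the only genuine obstacle to be the reduction step: verifying that the zeroth-order coefficient $b$ is truly bounded and nonnegative, which is where the dichotomy ``$p\geq 1$ or $c\equiv 0$'' is essential. Once the equation is written in the linear form above with $b\in L^\infty$, $b\geq 0$, both the strong maximum principle and Hopf's lemma apply in the standard way, and the sign condition $k\geq 0$ (hence nonnegative Neumann data) is what closes the boundary argument.
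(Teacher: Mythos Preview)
Your proof is correct and follows essentially the same strategy as the paper's: reduce to a linear parabolic inequality with a bounded nonnegative zeroth-order coefficient, invoke the strong maximum principle in the interior, and then use the Hopf boundary point lemma together with the nonnegativity of the nonlocal Neumann data to rule out zeros on $S_T$. The only cosmetic difference is that, instead of applying the maximum principle directly to $u_t-\Delta u+b(x,t)u=0$ with $b=c\,u^{p-1}$, the paper sets $v=u\,e^{\lambda t}$ with $\lambda\geq M^{p}$ so that $v_t-\Delta v\geq 0$, thereby reducing to the classical heat-equation maximum principle for $v$; the two devices are equivalent.
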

\begin{proof}
As $c(x,t)$ and $u(x,t)$ are continuous in $\overline{Q}_T$
functions then we have
\begin{equation}\label{E:1.10}
\max \{c(x,t), u(x,t)\}  \leq M, \,\, (x,t) \in \overline{Q}_T
\end{equation}
with some positive constant $M.$ Now we put $v= u\exp (\lambda t)$
where $\lambda \geq M^p.$ It is easy to verify that $v_t - \Delta
v \geq 0.$ Since $v (x,0) = u_0(x)\not\equiv0$ in $\Omega$ and $v
(x,t) \geq 0$ in $Q_T,$ by the strong maximum principle $v(x,t)>0$
in $Q_T.$ Let $v(x_0,t_0)=0$ in some point $(x_0,t_0)\in S_T.$
Then according to Theorem~3.6 of~\cite{Hu} it yields $\partial
v(x_0,t_0)/\partial\nu < 0,$ which contradicts~(\ref{v:g}).
\end{proof}

As a simple consequence of Theorem~\ref{Th3} and Lemma~\ref{Lem1},
we get the following uniqueness result for problem
(\ref{v:u})--(\ref{v:n}).
\begin{theorem}\label{Th4}\ Let problem (\ref{v:u})--(\ref{v:n}) has
a solution in $Q_T$ with nonnegative initial data for $l \geq 1$
and with positive initial data under the conditions $l < 1,$ $p
\geq 1$ or a positive in ${Q}_T\cup \Gamma_T$ solution if $\max
(p,l) < 1.$ Then a solution of (\ref{v:u})--(\ref{v:n}) is unique
in $Q_T.$
\end{theorem}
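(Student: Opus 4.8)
The plan is to derive uniqueness directly from the comparison principle (Theorem~\ref{Th3}) by applying it in both directions to a pair of solutions, handling the three parameter regimes separately according to the positivity hypotheses that Theorem~\ref{Th3} requires. Suppose $u$ and $v$ are two solutions of~(\ref{v:u})--(\ref{v:n}) in $Q_T$ under the stated conditions. Since any solution is simultaneously a supersolution and a subsolution (Definition~\ref{v:sup}), I would like to invoke Theorem~\ref{Th3} once with $\overline{u}=u$, $\underline{u}=v$ to get $u\geq v$, and once with the roles reversed to get $v\geq u$, yielding $u=v$ in $Q_T\cup\Gamma_T$. The only subtlety is verifying, in each regime, the positivity side-condition that Theorem~\ref{Th3} imposes when $l<1$.

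First I would dispose of the case $l\geq 1$: here Theorem~\ref{Th3} needs no positivity assumption, so the two-sided comparison applies immediately to any pair of nonnegative solutions, giving $u=v$. Next, for the regime $l<1$ with $p\geq 1$, the hypothesis grants positive initial data $u_0$, which is nontrivial; by Lemma~\ref{Lem1} (applicable since $p\geq 1$) any solution satisfies $u>0$ in $Q_T\cup S_T$. The remaining requirement for the comparison principle when $l<1$ is positivity in $Q_T\cup\Gamma_T$, which includes the initial slice $\overline\Omega\times\{0\}$; this is covered since $u_0>0$ on $\overline\Omega$ together with $u>0$ for $t>0$. Thus at least one of the two solutions is strictly positive throughout $Q_T\cup\Gamma_T$, so Theorem~\ref{Th3} applies in both directions and forces $u=v$. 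Finally, in the regime $\max(p,l)<1$ the statement directly assumes a positive solution in $Q_T\cup\Gamma_T$, so the side-condition of Theorem~\ref{Th3} is satisfied by fiat and the same two-sided comparison closes the argument.

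The main obstacle, such as it is, lies not in any hard estimate but in bookkeeping the positivity hypotheses so that Theorem~\ref{Th3} is legitimately invoked \emph{twice}. When $l<1$ the comparison principle requires that \emph{one} of the two functions be positive on $Q_T\cup\Gamma_T$; I must check that the same positive solution can serve as the privileged one in both applications (once as supersolution, once as subsolution), which is immediate since it is the positivity of a single fixed solution that is needed, not of the designated supersolution specifically. In the $p\geq 1$ subcase the delicate point is confirming positivity up to and including the boundary $S_T$ and the initial time, which is exactly what Lemma~\ref{Lem1} and the positive-data assumption supply. With these checks in place, uniqueness follows in all three regimes by the antisymmetry of the comparison inequality.
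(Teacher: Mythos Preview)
Your proposal is correct and follows precisely the route the paper indicates: the paper states only that Theorem~\ref{Th4} is ``a simple consequence of Theorem~\ref{Th3} and Lemma~\ref{Lem1},'' and you have supplied exactly those details, applying the comparison principle in both directions and invoking Lemma~\ref{Lem1} (together with the positive-data hypothesis) to secure the positivity side-condition when $l<1$. Your bookkeeping of the three regimes, including the observation that the single guaranteed positive solution can serve as the privileged function in both comparisons, is correct and complete.
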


Now we shall prove the nonuniqueness of a solution of our problem
with trivial initial datum for $l<p.$ We note that problem
(\ref{v:u})--(\ref{v:n}) with trivial initial datum has trivial
solution.

\begin{theorem}\label{Th5}\ Let $l < min \{1, p\}$ and $u_0(x) \equiv
0.$  Suppose that
\begin{equation}\label{E:1.101}
k(x,y_0,t_0)>0 \,\,\, \textrm{for any} \,\,\, x \in
\partial\Omega \,\,\, \textrm{and some} \,\,\, y_0 \in
\partial \Omega \,\,\, \textrm{and} \,\,\, t_0 \in [0,T).
\end{equation}
Then maximal solution $u_m (x,t)$ of problem
(\ref{v:u})--(\ref{v:n}) is nontrivial function in $Q_T.$
\end{theorem}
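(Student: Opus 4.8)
The plan is to produce one nontrivial subsolution of (\ref{v:u})--(\ref{v:n}) with zero initial data and then push it underneath the maximal solution via the positive approximations of Section~2. Recall that each $u_\varepsilon$ solves (\ref{E:2.1}) and satisfies $u_\varepsilon\ge\varepsilon>0$ (the constant $\varepsilon$ is a subsolution of (\ref{E:2.1})), and that $u_m=\lim_{\varepsilon\to0}u_\varepsilon$. Since $c\varepsilon^p\ge0$, every subsolution of (\ref{v:u})--(\ref{v:n}) is also a subsolution of (\ref{E:2.1}). Hence, if I construct such a $\underline u$ on a cylinder $\Omega\times(t_1,t_1+\sigma)$ with $\underline u(\cdot,t_1)=0$, the comparison principle for (\ref{E:2.1}) applies: the required positivity for the case $l<1$ is met because $u_\varepsilon>0$, and $\underline u(\cdot,t_1)=0\le u_\varepsilon(\cdot,t_1)$, so $\underline u\le u_\varepsilon$ there. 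Letting $\varepsilon\to0$ gives $\underline u\le u_m$, and since $\underline u$ will be strictly positive for $t>t_1$, this forces $u_m$ to be nontrivial. (The comparison is used with initial time $t_1$, which is legitimate by the time-translation invariance of the duality argument in Theorem~\ref{Th3}.) The point of the positive approximations is exactly that neither $u_m$ nor $\underline u$ is known to be positive at the bottom of the cylinder, whereas $u_\varepsilon$ is.

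The subsolution itself is a diffusive boundary layer ignited by the flux. By (\ref{E:1.101}) and continuity of $k$ there are $c_0>0$, a relatively open neighborhood $V$ of $y_0$ in $\overline\Omega$, and an interval $[t_1,t_2]\subset[0,T)$ with $t_1<t_2$ such that $k(x,y,t)\ge c_0$ for all $x\in\partial\Omega$, $y\in V$ and $t\in[t_1,t_2]$; the lower bound is \emph{uniform} in $x\in\partial\Omega$ precisely because (\ref{E:1.101}) holds for every $x$. Writing $d(x)=\mathrm{dist}(x,\partial\Omega)$, smooth in a collar of $\partial\Omega$, and $s=t-t_1$, I would take
\[
\underline u(x,t)=A\,s^{\mu}\Big(1-\frac{d(x)}{B\sqrt s}\Big)_+^{q},\qquad \mu=\frac{1}{1-l},\quad q\ge 3,
\]
extended by $0$ away from the collar, and restricted to $0<s\le\sigma$ so small that the layer $\{d(x)\le B\sqrt s\}$ stays inside the collar and, near $y_0$, inside $V$. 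Note $\mu>1$, so $\underline u_t\to0$ as $s\to0$ and $\underline u$ has the regularity demanded by Definition~\ref{v:sup}.

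Two differential inequalities have to be verified. On $\partial\Omega$ one computes $\partial\underline u/\partial\nu=(Aq/B)\,s^{\mu-1/2}$, while $\int_\Omega k\,\underline u^l\,dy\ge c_0\int_{V\cap\Omega}\underline u^l\,dy\ge C\,s^{\mu l+1/2}$; the choice $\mu=1/(1-l)$ makes the two powers of $s$ equal, which is exactly where $l<1$ enters, and the residual inequality between the constants holds after taking $A$ small. For the interior inequality, with $\eta=d(x)/(B\sqrt s)$ and $\Delta\underline u=\underline u_{dd}+\underline u_{d}\,\Delta d$, both $\underline u_t$ and $\underline u_{dd}$ are of order $s^{\mu-1}$, and after dividing by $A\,s^{\mu-1}$ the leading balance reduces to
\[
\mu(1-\eta)^2+\tfrac{q}{2}\,\eta(1-\eta)\ \le\ \frac{q(q-1)}{B^2}\qquad(\eta\in[0,1]),
\]
which holds with slack once $B$ is small, the left side being bounded. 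The two error terms, the curvature term $\underline u_{d}\,\Delta d=O(s^{\mu-1/2})$ and the absorption $-c\,\underline u^p=O(s^{\mu p})$, are both of strictly higher order than $s^{\mu-1}$; for the absorption this is $\mu p-(\mu-1)=(p-l)/(1-l)>0$, i.e. precisely the hypothesis $l<p$. Thus for $\sigma$ small enough the interior inequality holds, and $\underline u$ is a nontrivial subsolution, completing the argument.

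The main obstacle is this simultaneous verification with the correct self-similar scaling. A separable ansatz $g(t)\phi(x)$ cannot work: the zero state is an equilibrium of the interior dynamics, so no positive orbit can leave it through diffusion, and ignition must come through the non-Lipschitz boundary flux. The self-similar layer is what makes the boundary gain $\sim s^{\mu l+1/2}$ match the diffusive loss $\sim s^{\mu-1}$ (fixing $\mu$ via $l<1$) while keeping the absorption subordinate (via $l<p$); getting all the $s$-powers to line up correctly, together with the geometric reduction to the distance function, is the technical heart of the proof.
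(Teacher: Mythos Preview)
Your approach is essentially the one in the paper: build a self-similar boundary-layer subsolution supported in a collar of $\partial\Omega$, compare it with the strictly positive approximants $u_\varepsilon$ (which supplies the positivity needed in Theorem~\ref{Th3} when $l<1$), and pass to the limit. The paper writes the profile as $A(t-t_0)^\alpha\bigl(\xi_0-s/\sqrt{t-t_0}\bigr)_+^\beta$ in normal coordinates, chooses $\alpha$ \emph{strictly} greater than $1/(1-l)$ so that the boundary inequality holds for small time $T_0$, and takes $\xi_0$ small for the interior inequality; you take $\mu=1/(1-l)$ with equality, balance the boundary constants by choosing $A$ small (legitimate since $A^{1-l}\to0$), and close the interior inequality by taking $B$ small, which plays the same role as the paper's $\xi_0$. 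The role of the hypothesis $l<p$ is identified identically in both arguments, via $\mu p>\mu-1$.
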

\begin{proof} As we showed in Theorem~\ref{Th2} and Corollary~\ref{Cor1} a
maximal solution $u_m(x,t) = \lim_{\varepsilon \to 0}
u_\varepsilon (x,t),$ where $u_\varepsilon (x,t)$ is some positive
in $\overline{Q}_T$ supersolution of (\ref{v:u})--(\ref{v:n}). To
prove theorem we construct a nontrivial nonnegative subsolution
$\underline{u} (x,t)$ of (\ref{v:u})--(\ref{v:n}) with trivial
initial datum. By Theorem~\ref{Th3} then we have $u_\varepsilon
(x,t)\geq \underline{u} (x,t)$ and therefore maximal solution
$u_m(x,t)$ is nontrivial function.

To construct a subsolution we use the change of variables in a
neighborhood of $\partial \Omega$ as in \cite{CPE}. Let $\overline
x$ be a point in $\partial \Omega.$ We denote by $\widehat{n}
(\overline x)$ the inner unit normal to $\partial \Omega$ at the
point $\overline x.$ Since $\partial \Omega$ is smooth it is well
known that there exists $\delta >0$ such that the mapping $\psi
:\partial \Omega \times [0,\delta] \to \mathbb{R}^n$ given by
$\psi (\overline x,s)=\overline x +s\widehat{n} (\overline x)$
defines new coordinates ($\overline x,s)$ in a neighborhood of
$\partial \Omega$ in $\overline\Omega.$

A straightforward computation shows that, in these coordinates,
$\Delta$ applied to a function $g(\overline x,s)=g(s),$ which is
independent of the variable $\overline x,$ evaluated at a point
$(\overline x,s)$ is given by
\begin{equation}\label{E:1.102}
\Delta g(\overline x,s) = \frac{\partial^2g}{\partial s^2}
(\overline x,s) - \sum_{j=1}^{n-1} \frac{H_j (\overline x)}{1-s
H_j (\overline x)}\frac{\partial g}{\partial s} (\overline x,s),
\end{equation}
where $H_j (\overline x)$ for $j=1,...,n-1,$ denotes the principal
curvatures of $\partial\Omega$ at $\overline x.$

Under the assumptions of the theorem there exists $\overline{t}>0$
such that $k(x,y,t)>0$ for $t_0 \leq t \leq t_0 + \overline{t},$
$x \in
\partial\Omega$ and $y \in V(y_0),$ where $V(y_0)$ is some
neighborhood of $y_0$ in $\overline{\Omega}.$

 Let $1/(1-l) < \alpha \leq 1/(1-p)$ for $p<1$ and $\alpha > 1/(1-l)$
for $p\geq1,$ $2< \beta < 2/(1-p)$ for $p < 1$ and $\beta > 2$ for
$p\geq1$ and assume that $A>0,$ $0<\xi_0 \leq 1$ and $0<T_0
<\min(T-t_0,\overline{t}, \delta^2).$ For points in $\partial
\Omega \times [0, \delta]\times (t_0,t_0+T_0]$ of coordinates
$(\overline x,s,t)$ define
\begin{equation}\label{E:1.11}
\underline{u}(\overline x,s,t)= A (t-t_0)^\alpha
\left(\xi_0-\frac{s}{\sqrt{t-t_0}}\right)^\beta_+
\end{equation}
and extend $\underline{u}$ as zero to the whole of
$\overline{Q_{\tau}}$ with $\tau = t_0+T_0$ . Using
(\ref{E:1.102}), we get that
\begin{eqnarray*}
&&\hspace{-0.6cm}\underline{u}_t (\overline x,s,t) - \Delta
\underline{u}(\overline x,s,t) + c(x,t)\underline{u}^p(\overline
x,s,t)=\alpha A (t-t_0)^{\alpha-1}\left(\xi_0-\frac{s}{\sqrt{t-t_0}}\right)^\beta_+\\
&+&\frac{\beta}{2} A
s(t-t_0)^{\alpha-3/2}\left(\xi_0-\frac{s}{\sqrt{t-t_0}}\right)^{\beta
- 1}_+ -\beta (\beta -1) A (t-t_0)^{\alpha-1}
\left(\xi_0-\frac{s}{\sqrt{t-t_0}}\right)^{\beta -2}_+\\
&-& \beta A
(t-t_0)^{\alpha-1/2}\left(\xi_0-\frac{s}{\sqrt{t-t_0}}\right)^{\beta
- 1}_+\sum_{j=1}^{n-1} \frac{H_j(\overline x)}{1-s H_j (\overline
x)} \\
&+& A^p c(x,t)(t-t_0)^{\alpha p}
\left(\xi_0-\frac{s}{\sqrt{t-t_0}}\right)^{\beta p}_+ \leq 0
    \end{eqnarray*}
for $(\overline x,s,t) \in \partial \Omega \times (0,
\delta]\times (t_0,\tau)$ and small values of $\xi_0.$

It is obvious,
    \begin{equation*}
\frac{\partial\underline u}{\partial\nu}(\overline
x,0,t)=-\frac{\partial\underline u}{\partial s}(\overline
x,0,t)=\beta A (t-t_0)^{\alpha-\frac{1}{2}}\xi_0^{\beta - 1}, \,\,
\overline x \in \partial \Omega, \, t_0 < t < \tau.
    \end{equation*}
To prove that $\underline{u}$ is the subsolution of
(\ref{v:u})--(\ref{v:n}) in $Q_\tau$ it is enough to check the
validity of the following inequality
\begin{equation}\label{E:1.12}
\beta A (t-t_0)^{\alpha-\frac{1}{2}}\xi_0^{\beta - 1}  \leq A^l
(t-t_0)^{\alpha l} \int_{\partial\Omega\times[0,\delta]}
k(x,(\overline{y},s),t) |J(\overline y,s)|
\left(\xi_0-\frac{s}{\sqrt{t-t_0}}\right)^{\beta l}_+ \,
d\overline y \, ds
\end{equation}
for $x \in \partial\Omega$ and $t_0 < t< \tau.$ Here $J(\overline
y,s)$ is Jacobian of the change of variables. Estimating the
integral $I$ in the right-hand side of (\ref{E:1.12})
$$
I = (t-t_0)^\frac{1}{2} \int_{\partial\Omega} \, d\overline y \,
\int_0^{\xi_0} k(x,(\overline{y},z\sqrt{t-t_0}),t) |J(\overline
y,z\sqrt{t-t_0})| \left( \xi_0-z \right)^{\beta l}_+ dz \geq C
(t-t_0)^\frac{1}{2},
$$
where positive constant $C$ does not depend on $t,$ we obtain that
(\ref{E:1.12}) is true if we take $T_0$ sufficiently small.
\end{proof}

\begin{remark}\label{Rem1}
Let $c(x,t_0) \leq c_0$ and $k(x,y,t_0) \geq k_0$ for $x \in
\Omega, \,$ $y \in \partial\Omega, \,$ some $t_0 \in [0, T)$ and
positive constants $c_0$ and $k_0.$ Then nonuniqueness of trivial
solution for our problem holds for $\, l=p < 1$ and large values
of $k_0/c_0.$ To prove this we can take in (\ref{E:1.11}) $\,
\alpha = 1/(1-l),\,$ $\beta = 2/(1-l)$ and $A$ in a suitable way.
\end{remark}

To prove the uniqueness of trivial solution of
(\ref{v:u})--(\ref{v:n}) with $u_0(x) \equiv 0$ we need the
following comparison principle.
\begin{lemma}\label{Lem2} Let $\underline{u}$ and $\overline{u}$
be a subsolution and a strict supersolution of
(\ref{v:u})--(\ref{v:n}) in ${Q}_T,$ respectively. Then
$\overline{u} \geq \underline{u}$ in $\overline{Q}_T.$
\end{lemma}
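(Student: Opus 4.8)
The plan is to mimic the proof of Theorem~\ref{Th3}, but to exploit the strictness of the supersolution inequality in (\ref{v:sup^g}) in order to remove the positivity hypothesis that was needed there when $l<1$. The key point is that for Lemma~\ref{Lem2} the initial data of $\underline{u}$ and $\overline{u}$ need not agree: we only assume $\underline{u}$ is a subsolution and $\overline{u}$ is a \emph{strict} supersolution, so the boundary flux of $\overline{u}$ strictly dominates its nonlocal integral. Let me think about what structure to set up.

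First I would reduce to a bounded, Lipschitz-linearized setting on a fixed time slab. Set $w = \underline{u} - \overline{u}$ and fix $T_0 \in (0,T)$. Because both functions are continuous on $\overline{Q}_{T_0}$, I get uniform bounds as in (\ref{E:1.6}): $0 \le \underline{u}, \overline{u} \le M$ and $0 \le c, k \le M$ for some $M>0$. I would then subtract the integral (weak) formulations of the defining inequalities for $\underline{u}$ and $\overline{u}$ against a nonnegative test function $\varphi$ satisfying the homogeneous Neumann condition $\partial\varphi/\partial\nu = 0$ on $S_{T_0}$, exactly as in (\ref{E:1.3})--(\ref{E:1.5}). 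The absorption terms $-c\,\underline{u}^p$ and $-c\,\overline{u}^p$ combine via the mean value theorem into $-c\,p\,\theta_1^{p-1} w$, and since the coefficient is nonnegative this term only helps the inequality in the direction I want, so it can be discarded after testing against $\varphi \ge 0$.

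The main obstacle, and the place where strictness is essential, is the boundary integral term involving $u^l$ when $l<1$: the factor $l\,\theta_2^{l-1}$ blows up where the functions vanish, so I \emph{cannot} bound it uniformly and run Gronwall as in Theorem~\ref{Th3}. The plan is to handle this by the strict inequality. Because $\overline{u}$ is a strict supersolution, there is a genuine gap in (\ref{v:sup^g}); by continuity and compactness of $\overline{Q}_{T_0}$ this gap is bounded below by some constant on the boundary. I would argue by contradiction: suppose $w(x^*,t^*) > 0$ at some first time $t^* = \inf\{t : \max_x w(\cdot,t) > 0\}$. On the slab $[0,t^*]$ one has $\underline{u} \le \overline{u}$, hence $\underline{u}^l \le \overline{u}^l$ pointwise there, so the difference of the nonlocal boundary integrals is $\le 0$; combined with the strict deficit in $\overline{u}$'s flux, the net boundary contribution to the evolution of $w$ at $t^*$ is strictly negative. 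Feeding a suitably concentrated test function $\varphi$ (as with the sequence $\{\psi_n\}$ in the proof of Theorem~\ref{Th3}, localizing where $w>0$) into the integral inequality then forces $\int_\Omega w(x,t^*)_+\,dx \le 0$, contradicting $w(x^*,t^*)>0$.

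Concretely, once the interior absorption term is dropped and the boundary term is shown to be nonpositive on the relevant slab, the integral inequality analogous to (\ref{E:1.8}) collapses to
\begin{equation*}
\int_\Omega w(x,t)_+ \, dx \le \int_\Omega w(x,0)_+ \, dx + l M^l |\partial\Omega| \int_0^t \int_\Omega w(y,\tau)_+ \, dy\,d\tau
\end{equation*}
on any slab where $w \le 0$ holds up to time $t$, after which Gronwall's inequality (with $w(x,0)_+ = 0$ since $\underline{u}(x,0) \le \overline{u}(x,0)$ is not assumed — here I must instead build the initial comparison into the strict-supersolution argument, which is the delicate point) yields $w_+ \equiv 0$. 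I expect the genuinely hard part to be making the contradiction at the first crossing time $t^*$ rigorous: one must show that the strict boundary deficit survives the limiting procedure in the test functions and dominates the a~priori unbounded $\theta_2^{l-1}$ factor. I would therefore phrase the estimate so that the strict supersolution property enters as a strict sign on the boundary integral \emph{before} any mean-value linearization of the $u^l$ term is attempted, thereby avoiding the singular coefficient altogether on the critical slab.
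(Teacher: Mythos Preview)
Your plan is viable in spirit but misses a much simpler route, and contains two concrete errors.

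\textbf{The paper's argument.} The paper does not redo the duality/Gronwall machinery of Theorem~\ref{Th3}. It simply sets $v=\overline{u}+\varepsilon$. Then $v>0$, $v_t-\Delta v+c\,v^p\ge \overline{u}_t-\Delta\overline{u}+c\,\overline{u}^p\ge 0$, $v(x,0)\ge u_0(x)$, and on the boundary the \emph{strict} gap in (\ref{v:sup^g}) absorbs the perturbation: by continuity on the compact set $\partial\Omega\times[0,T_0]$ there is $\eta>0$ with $\partial\overline{u}/\partial\nu\ge\int_\Omega k\,\overline{u}^l\,dy+\eta$, while $\int_\Omega k\,[(\overline{u}+\varepsilon)^l-\overline{u}^l]\,dy\to 0$ uniformly as $\varepsilon\to 0$ (subadditivity of $s^l$ for $l\le 1$, Lipschitz for $l\ge 1$). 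Hence for small $\varepsilon$, $v$ is a supersolution that is strictly positive, so the hypothesis of Theorem~\ref{Th3} for $l<1$ is met, giving $\underline{u}\le v$; let $\varepsilon\to 0$. This is a three-line proof.

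\textbf{Where your plan goes wrong.} First, your remark that ``$\underline{u}(x,0)\le\overline{u}(x,0)$ is not assumed'' is incorrect: by Definition~\ref{v:sup}, a subsolution satisfies $\underline{u}(x,0)\le u_0(x)$ and a supersolution $\overline{u}(x,0)\ge u_0(x)$, so $w(x,0)\le 0$ automatically. Second, the Gronwall inequality you display with coefficient $lM^l|\partial\Omega|$ is obtained from $l\theta_2^{l-1}\le M^{l-1}$, but for $l<1$ this bound is false (it is $\theta_2^{l-1}\ge M^{l-1}$); you yourself flagged this coefficient as unbounded, yet then use it. Third, the first-crossing argument does not close: at $t=t^*$ you have $w\le 0$ by continuity, so there is nothing to contradict; the contradiction must come from $t$ slightly beyond $t^*$, but there the sign condition $\underline{u}^l\le\overline{u}^l$ is no longer available, and you are back to controlling $\underline{u}^l-\overline{u}^l$ without a Lipschitz bound.

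Your idea could be rescued by replacing the mean-value linearization with the elementary bound $a^l-b^l\le(a-b)_+^l$ for $0<l<1$, leading to an ODE inequality of the form $g'(t)\le C_1 g(t)^l-C_2\eta$ with $g(0)=0$ and $g(t)=\int_\Omega w_+\,dx$, which forces $g\equiv 0$. But this is still more work than the paper's $\varepsilon$-shift, which reduces Lemma~\ref{Lem2} to Theorem~\ref{Th3} in one stroke.
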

\begin{proof}
Let $v=\overline{u} + \varepsilon$ for $\varepsilon >0.$ Note that
for any $T_0 < T$ under suitable choice of $\varepsilon$ we have
$$
\frac{\partial v(x,t)}{\partial \nu} \geq \int_{\Omega} k(x,y,t)
v^l(y,t) dy \,\,\, \textrm{for} \,\,\, x \in
\partial \Omega, \,\,\, 0 \leq t \leq T_0.
$$
Now we can apply Theorem~\ref{Th3} to get the inequality $v \geq
\underline{u}$ in $\overline{Q}_{T_0}.$ Passing here to the limit
as $\varepsilon \to 0$ we prove the lemma. \end{proof}

\begin{theorem}\label{Th6}\ Let $p < l < 1$ and $u_0(x) \equiv
0.$  Suppose that  $c(x,t)\geq c_1>0$ for $(x,t) \in \overline
Q_T.$  Then the solution $u \equiv 0$ of problem
(\ref{v:u})--(\ref{v:n}) is unique in $Q_T.$
\end{theorem}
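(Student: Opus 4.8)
The plan is to prove uniqueness by showing that the only nonnegative solution is the trivial one, and the natural strategy is to compare any solution against a suitable strict supersolution using Lemma~\ref{Lem2}. Since $u_0 \equiv 0$, the trivial function is a subsolution, so the real content is that any solution $u$ must be bounded above by functions tending to zero. First I would try to construct, for each $\varepsilon > 0$, a strict supersolution $\overline{u}_\varepsilon$ that is small and decreases to $0$ as $\varepsilon \to 0$; then Lemma~\ref{Lem2} would give $u \leq \overline{u}_\varepsilon$, and letting $\varepsilon \to 0$ forces $u \equiv 0$. The key feature we must exploit is the hypothesis $c(x,t) \geq c_1 > 0$ together with $p < l$: the absorption term $-c(x,t)u^p$ dominates the boundary growth because $p < l$ makes the absorption relatively stronger for small $u$.

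The concrete candidate I would try is a spatially near-constant supersolution of the form $\overline{u}(x,t) = \eta(t)\,\psi(x) + \varepsilon$, or more simply a function depending essentially on time that solves an ODE comparison. Because the boundary integral scales like $u^l$ and the absorption like $u^p$ with $p < l$, for small $u$ the term $u^p$ is large relative to $u^l$, so I expect to be able to balance $\frac{\partial \overline{u}}{\partial \nu}$ against $\int_\Omega k\,\overline{u}^l\,dy$ while keeping $\overline{u}_t \geq \Delta \overline{u} - c\,\overline{u}^p$ with strict inequality on the boundary. The cleanest route is probably to mimic the subsolution construction in Theorem~\ref{Th5} but reverse the roles: build an explicit barrier in the boundary coordinates $(\overline x, s, t)$ using the Laplacian formula~(\ref{E:1.102}), choosing exponents so that the absorption term $A^p c(x,t)(\cdots)^{\beta p}$ now helps make the interior inequality point the supersolution way, while the boundary flux gives a strict inequality over the $u^l$ integral.

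The hard part will be getting the boundary inequality~(\ref{v:sup^g}) to hold strictly for all small times simultaneously with the interior inequality. In Theorem~\ref{Th5} the inequality $p < 1$ and the relation between $\alpha$ and $1/(1-p)$ were chosen so the absorption term was subordinate; here I must instead pick $\alpha$ and $\beta$ (now using $p < l$) so that the time-powers on the two sides of the boundary relation force the flux to strictly exceed the integral as $t \to t_0^+$. Since $p < l$, the integral $\int_\Omega k\,\overline{u}^l$ carries a higher power of the small amplitude than the absorption can compensate unless the amplitude itself is driven down; this is exactly where $c_1 > 0$ is essential, providing a uniform absorption that kills the solution before the boundary term can sustain it. I would verify that the time exponent $\alpha l$ on the integral side exceeds $\alpha - 1/2$ on the flux side for the chosen $\alpha$, guaranteeing strictness for small $t - t_0$, and then invoke Lemma~\ref{Lem2} to sandwich $u$ between $0$ and a family collapsing to $0$.

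Once the strict supersolution family $\{\overline{u}_\varepsilon\}$ is in hand, the conclusion follows formally: Lemma~\ref{Lem2} applied to the subsolution $\underline{u} = u$ (any nonnegative solution, which is in particular a subsolution) and the strict supersolution $\overline{u}_\varepsilon$ yields $u \leq \overline{u}_\varepsilon$ in $\overline{Q}_{T_0}$; passing to the limit $\varepsilon \to 0$ gives $u \equiv 0$ on $\overline{Q}_{T_0}$, and since $T_0 < T$ was arbitrary the uniqueness holds on all of $Q_T$. The main technical obstacle, as noted, is purely the barrier construction and the exponent bookkeeping; the comparison step is routine given Lemma~\ref{Lem2}.
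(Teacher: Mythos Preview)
Your overarching strategy---build a family $\{\overline{u}_\varepsilon\}$ of strict supersolutions tending to zero, invoke Lemma~\ref{Lem2} to get $u\le\overline{u}_\varepsilon$, then let $\varepsilon\to0$---is exactly the paper's. The difference is in the barrier, and your choice creates a genuine gap.

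The paper does \emph{not} mimic the time-dependent profile~(\ref{E:1.11}) from Theorem~\ref{Th5}. Instead it takes a \emph{stationary} barrier in the boundary coordinates,
\[
\overline{u}_\varepsilon(\overline{x},s,t)=\varepsilon A\bigl(\xi_0-\varepsilon^{-\gamma}s\bigr)^{1/\gamma}_+,\qquad \tfrac{1-l}{2}<\gamma<\tfrac{1-p}{2},
\]
extended by zero. The window for $\gamma$ is nonempty precisely because $p<l$; the upper bound $\gamma<(1-p)/2$ lets the absorption $c_1\overline{u}_\varepsilon^{\,p}$ dominate $\Delta\overline{u}_\varepsilon$ in the interior for small $\xi_0$, and the lower bound $\gamma>(1-l)/2$ makes the flux $\sim\varepsilon^{1-\gamma}\xi_0^{(1-\gamma)/\gamma}$ strictly exceed the boundary integral $\sim\varepsilon^{l+\gamma}\xi_0^{(l+\gamma)/\gamma}$ for small $\xi_0$. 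Crucially, this strict boundary inequality holds for \emph{every} $t\in[0,T)$, and the small parameter is $\varepsilon$ itself.

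Your ansatz $A(t-t_0)^{\alpha}\bigl(\xi_0-s/\sqrt{t-t_0}\bigr)^{\beta}_+$ runs into two problems. First, at $t=t_0$ both the barrier and its normal derivative vanish, so the boundary inequality is an equality there, not strict; and the proof of Lemma~\ref{Lem2} really needs a uniform gap, since it passes to $v=\overline{u}+\varepsilon$ and must check $\partial v/\partial\nu\ge\int_\Omega k\,v^l\,dy$---at $t=t_0$ the left side is $0$ while the right side is $\int_\Omega k\,\varepsilon^l\,dy>0$, so the lemma fails to apply. Second, you obtain strictness only ``for small $t-t_0$'' yet then assert ``$T_0<T$ was arbitrary''; these are inconsistent without an iteration argument you do not provide. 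The stationary barrier avoids both issues: it is positive on $\partial\Omega$ at $t=0$ with a positive normal derivative there, and it works on all of $[0,T]$ at once.
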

\begin{proof} Since there is the comparison principle for a solution
and a strict supersolution of (\ref{v:u})--(\ref{v:n}), it is
sufficiently to construct arbitrarily small strict supersolutions
which have a positive values on $\partial\Omega$ (see \cite{CER}
for another problem).

We shall use the change of variables in a neighborhood of
$\partial\Omega$ which we introduced in Theorem~\ref{Th5}. Let
$(1-l)/2 < \gamma < (1-p)/2, \,$
$0<\varepsilon<\delta^{1/\gamma},$ $A>0$ and $0<\xi_0 \leq 1.$
 For points in $\partial \Omega \times
[0, \delta]\times [0,T]$ of coordinates $(\overline x,s,t)$ define
\begin{equation}\label{E:1.12'}
\overline{u}_\varepsilon (\overline x,s,t)= \varepsilon A
\left(\xi_0- \varepsilon^{-\gamma} s\right)^{1/\gamma}_+
\end{equation}
and extend $\underline{u}_\varepsilon$ as zero to the whole of
$\overline{Q_T}.$ Using (\ref{E:1.102}), we get that
$$
\overline{u}_{\varepsilon t} (\overline x,s,t) - \Delta
\overline{u}_\varepsilon (\overline x,s,t) + c(x,t)
\overline{u}_\varepsilon^p (\overline x,s,t) \geq 0
$$
in $Q_T$ for small values of $\xi_0.$

To show that $\overline{u}_\varepsilon$ is a strict supersolution
we need to prove the following inequality
\begin{equation}\label{E:1.13}
\frac{A}{\gamma}\varepsilon^{1-\gamma} \xi_0^{(1-\gamma)/\gamma}
> (\varepsilon A)^l \int_{\partial\Omega\times[0,\delta]}
k(x,(\overline{y},s),t) |J(\overline y,s)|
\left(\xi_0-\varepsilon^{-\gamma} s \right)^{l/\gamma}_+ \,
d\overline y \, ds
\end{equation}
for $x \in \partial\Omega$ and $0 \leq t < T.$ Let $ k(x,y,t) \leq
k_1$ in $Q_T.$ Then estimating the right-hand side of
(\ref{E:1.13}) $J,$ we get
$$
J \leq k_1 C A^l \varepsilon^{l+\gamma} \xi_0^{(l+\gamma)/\gamma},
$$
where positive constant $C$ depend only on $p,$ $l$ and
$\partial\Omega.$ Hence (\ref{E:1.13}) holds if we take $\xi_0$
small enough.

We are in a position to complete the proof. Suppose for a
contradiction that there exists a solution of
(\ref{v:u})--(\ref{v:n}) with trivial initial function which is
not identically zero in ${Q}_T.$ Then by Lemma~\ref{Lem2} it
follows that
$$
u (x,t) \leq u_\varepsilon (x,t) \,\,\, \textrm{in} \,\,\, {Q}_T
$$
for all $0<\varepsilon<\delta^{1/\gamma}.$ This is a contradiction
since $u_\varepsilon \to 0$ uniformly on $\overline{Q}_T$ as
$\varepsilon \to 0.$ \end{proof}

\begin{remark}\label{Rem3}
Let the assumptions of Theorem~\ref{Th6} fulfill but only $l = p.$
Then the conclusion of Theorem~\ref{Th6} holds for large values of
$\, c_1/k_1,$ where $k_1$ was defined in the proof. To prove this
we can take in (\ref{E:1.12'}) $\, \gamma = (1-l)/2$ and $A$ in a
suitable way.
\end{remark}

\begin{remark}\label{Rem4}
Note that under the assumptions of Theorem~\ref{Th6} or
Remark~\ref{Rem3} there exists a class of nontrivial initial data
such that $u(x,t)\equiv 0$ for large values of $t.$ To prove this
we can modify function $\overline{u}_\varepsilon$ from
(\ref{E:1.12'}) in the following way
$$
\overline{u}_\varepsilon (\overline x,s,t)= \varepsilon
\left(\xi_0-\varepsilon^{-\gamma} s - \mu t \right)^{1/\gamma}_+,
\; \mu > 0.
$$
\end{remark}


\begin{thebibliography}{99}

        \bibitem{Aguirre_Escobedo} J.\,Aguirre and M.\,Escobedo,
        {\it A Cauchy problem for $u_t-\Delta u=u^p:$ Asymptotic behavior of solutions},
        Ann. Fac. Sci. Toulouse {\bf 8} (1986-87), 175--203.

        \bibitem{B} P.\,Bokes, {\it  A uniqueness result for a semilinear parabolic system},
        J. Math. Anal. Appl. {\bf 331} (2007), 567--584.

        \bibitem{CPE} C.\,Cortazar,\;M.\, del Pino and M.\,Elgueta,
        {\it On the short-time behaviour of the free boundary of a porous medium equation},
        Duke Math. J. {\bf 87} (1997), no.~1, 133--149.

        \bibitem{Cortazar} C.\,Cortazar,\;M.\,Elgueta and J.~D.\, Rossi,
        {\it Uniqueness and non-uniqueness for a system of heat equations with non-linear coupling at the boundary},
        Nonlinear Anal. {\bf 37} (1999), no.~2,  257--267.

        \bibitem{CER} C.\, Cortazar,\; M.\, Elgueta and J. D.\, Rossi,
        {\it Uniqueness and nonuniqueness for the porous medium equation with non linear boundary condition},
        Diff. Int. Equat. {\bf 16} (2003), no.~10, 1215--1222.

        \bibitem{Deng} K.\,Deng,
        {\it Comparison principle for some nonlocal problems},
        Quart. Appl. Math. {\bf 50} (1992), no.~3, 517--522.

        \bibitem{Escobedo_Herrero} M.\,Escobedo and M.\,A.\,Herrero,
        {\it A semilinear parabolic system in a bounded domain},
        Ann. Mat. Pura Appl. {\bf CLXV} (1993), 315--336.

        \bibitem{Fang} Z.B.\,Fang and J.\,Zhang,
        {\it Global and blow-up solutions for the nonlocal p-Laplacian evolution equationwith weighted
        nonlinear nonlocal boundary condition},
        J. Integral Equat. Appl. {\bf 26} (2014), no.~2, 171--196.

         \bibitem{FW} H.\,Fujita and S.\,Watanabe,
        {\it On the uniqueness and non-uniqueness of solutions of initial value problems for some quasi-linear parabolic equations},
        Comm. Pure Appl. Math. {\bf 21} (1968), no.~6, 631--652.

        \bibitem{Hu} B.\,Hu,
        {\it Blow-up theories for semilinear parabolic equations},
        Lecture Notes in Mathematics {\bf 2018} (2011), 127~p.

        \bibitem{Hu_Yin1} B.\,Hu and H.-M.\,Yin,
        {\it Critical exponents for a system of heat equations coupled in a non-linear boundary condition},
        Math. Methods Appl. Sci. {\bf 19} (1996), no.~14, 1099--1120.

        \bibitem{Gao} Y.\,Gao and W.\,Gao,
        {\it Existence and blow-up of solutions for a porous medium
        equation with nonlocal boundary condition},
         Appl. Anal. {\bf 90} (2011), no.~5, 799--809.

        \bibitem{Gladkov_Guedda} A.\,Gladkov and M.\,Guedda,
        {\it Blow-up problem for semilinear heat equation with absorption and a nonlocal boundary condition},
        Nonlinear Anal. {\bf 74} (2011), no.~13, 4573--4580.

        \bibitem{Gladkov_Guedda2} A.\,Gladkov and M.\,Guedda,
        {\it Semilinear heat equation with absorption and a nonlocal boundary condition},
         Appl. Anal. {\bf 91} (2012), no.~12, 2267--2276.

        \bibitem{Gladkov_Kavitova} A.\,Gladkov and T.\,Kavitova,
        {\it Initial boundary value problem for a semilinear parabolic
        equation with nonlinear nonlocal boundary
        conditions}, Ukrain. Math. J., article in press.

        \bibitem{Gladkov_Kavitova2} A.\,Gladkov and T.\,Kavitova,
         {\it Blow-up problem for semilinear heat equation with
         nonlinear nonlocal boundary condition}, Appl. Anal.,
         article in press.

        \bibitem{Gladkov_Kim} A.\,Gladkov and K.\,I.\,Kim,
        {\it Blow-up of solutions for semilinear heat equation with nonlinear nonlocal boundary condition},
        J. Math. Anal. Appl. {\bf 338} (2008), 264--273.

        \bibitem{Gladkov_Kim1} A.\,Gladkov and K.\,I.\,Kim,
        {\it Uniqueness and nonuniqueness for reaction-diffusion equation with nonlocal boundary condition},
        Adv. Math. Sci. Appl. {\bf 19} (2009), no.~1, 39--49.

        \bibitem{Gladkov_Nikitin} A.\,Gladkov and A.\,Nikitin,
        {\it A reaction-diffusion system with nonlinear nonlocal boundary conditions},
        Int. J. Partial Differential Equations {\bf 2014} (2014), Article ID 523656, 10
        pages.

        \bibitem{Kahane} C.~S.\,Kahane,
        {\it On the asymptotic behavior of solutions of parabolic equations},
        Czechoslovak Math. J. {\bf 33} (1983), no.~108, 262--285.

        \bibitem{K} M. Kordo$\breve s$, {\it On uniqueness for a semilinear parabolic system coupled in
        an equation and a boundary condition}, J. Math. Anal. Appl. {\bf 298} (2004), 655--666.

        \bibitem{LSU}{ Ladyzhenskaja, O., Solonnikov, V. and Ural'ceva, N.}, {\em Linear and quasilinear
        equations of parabolic type}, {Transl. Math. Monographs} { 23}(1968).

        \bibitem{Liu1} D.\,Liu,
        {\it Blow-up for a degenerate and singular parabolic equation with nonlocal boundary condition},
        J. Nonlinear Sci. Appl. {\bf 9} (2016), 208--218.

        \bibitem{Liu2} D.\,Liu and C.\,Mu,
        {\it Blowup properties for a semilinear reaction-diffusion system with nonlinear nonlocal boundary conditions},
        Abstr. Appl. Anal. {\bf 2010} (2010), Article ID 148035, 17 pages.

        \bibitem{PaoBook} C.~V.\,~Pao,
       {\em  Nonlinear parabolic and elliptic equations},
       {Plenum Press, New York}, 1992.

        \bibitem{Zhong} G.\,Zhong and L.\,Tian
        {\it Blow up problems for a degenerate parabolic
        equation with nonlocal source and nonlocal nonlinear boundary
        condition}, Boundary Value Problems {\bf 2012} (2012), no.~45, 14 pages.

        \bibitem{Zhou} J.\,Zhou and D.\,Yang
        {\it Blowup for a degenerate and singular parabolic equation
with nonlocal source and nonlocal boundary}, Appl. Math. Comput.
{\bf 256} (2015), 881Ц884.




\end{thebibliography}
\end{document}